\theoremstyle{definition}
\newtheorem{Definition}{Definition}[section]
\newtheorem{Theorem}[Definition]{Theorem}
\newtheorem{Lemma}[Definition]{Lemma}
\newtheorem{Remark}[Definition]{Remark}
\newtheorem{Proposition}[Definition]{Proposition}
\title{Linear orbits of alternating forms on real vector spaces}
\author{Leonid Ryvkin}
\address{Leonid Ryvkin, Fakult\"at f\"ur Mathematik, Ruhr-Universit\"at Bochum, Universit\"atsstr. 150, 44801 Bochum, Germany}
\email{Leonid.Ryvkin@ruhr-uni-bochum.de}
\subjclass[2010]{15A72, 15A21} 
\begin{document}

\maketitle

\begin{abstract}
In this note we complete the calculation of the number of $GL(\mathbb R^n)$-orbits on $\Lambda^k(\mathbb R^n)^*$, by treating the cases $(n,k)= (7,4)$ and $(8,5)$ not covered in the literature. We also calculate the number of of non-degenerate and stable orbits, as they are of special interest to multisymplectic and special geometry.   
\end{abstract}

\noindent\rule[0.5ex]{\linewidth}{1pt}

\section*{Introduction}
The last decades have seen a growing interest in multisymplectic geometry (cf., e.g., \cite{MR1694063, FRZ, MR2892558, MR2882772} for research in this area and \cite{MR916718, MR1871001} for the r\^ole of non-degenerate three-forms in special geometries). A manifold $M$ together with a differential form $\alpha$ of degree $k$ is called multisymplectic if $\alpha$ is closed and non-degenerate. The latter condition means that the map  
\[T_pM\to \Lambda^{k-1}(T_pM)^*,~ v\mapsto \iota_v(\alpha_p)\]
is injective for all $p\in M$, where $\iota_v(\alpha_p)$ denotes the contraction of a vector $v$ into the $k$-form $\alpha_p$ on $T_pM$.\\

In contrast to the symplectic case (i.e., the case $k=2$) already the linear theory is not trivial for general $k$. In fact, there are typically many different equivalence classes even for non-degenerate alternating $k$-forms on a real vector space $V$.

Since the knowledge of these linear equivalence classes (or linear types) are fundamental for multisymplectic geometry, we give here a complete answer to the following question:

\begin{quote}\emph{
Does the natural $GL(V)$-action on $\Lambda^k(V^*)$ have a finite or infinite number of orbits and what is this number in the former case?}
\end{quote}

Furthermore, we answer these questions  also for non-degenerate and stable orbits (i.e. orbits of non-degenerate and stable forms), as well. We analyse the cases $(k,n)=(4,7)$ and $(5,8)$ that were hitherto not covered by the literature and resume  all cases in Theorem \ref{thm} with proofs for the two cases mentioned above (Proposition \ref{85} and Proposition \ref{74}) and references to proofs for all other cases.\\

{\bf  Acknowledgements.}  The author thanks Tilmann Wurzbacher for suggesting the problem considered in this article and for useful discussions.

\section{Preliminaries}
Throughout this note, let  $V$ be a finite-dimensional real vector space of dimension $n>1$ and $k\in\{0,...,n\}$ a natural number.

\begin{Definition} We define the \emph{natural left action of $GL(V)$ on $\Lambda^k(V)$ (resp. the natural right action of $GL(V)$ on  $\Lambda^k(V^*)$)} by 
\begin{align*}&\theta_k:GL(V)\times \Lambda^k(V)\to \Lambda^k(V),&~~&\theta_{k,g}\left(\sum_iv_1^i\wedge ...\wedge v_k^i\right)=\sum_i(gv_1^i)\wedge ...\wedge (gv_k^i)\\
 &\theta^k:GL(V^*)\times \Lambda^k(V^*)\to \Lambda^k(V^*), &~~&\theta^k_g(\alpha)=\alpha\circ \theta_{k,g}.
\end{align*}
For $g\in GL(V)$, $\xi\in \Lambda^k(V)$ and $\alpha\in\Lambda^k(V^*)$, we will write $g\cdot \alpha$ for $\theta_{k,g}(\xi)$ and $g\cdot \alpha$ for $\theta^k_g(\alpha)$.
\end{Definition}

We will be especially interested in orbits of non-degenerate forms and stable orbits in $\Lambda^k(V^*)$, as they play an important role in multisymplectic geometry.
\begin{Definition}$ $
\begin{enumerate}
\item We define the contraction map $V\times \Lambda^k(V^*)\to \Lambda^{k-1}(V^*)$ by $(v,\alpha)\mapsto \iota_v\alpha$, where $(\iota_v\alpha)(v_1,...,v_{k-1})=\alpha(v,v_1,...,v_{k-1})$.
\item We uniquely extend the contraction to a linear map $\Lambda^l (V)\times \Lambda^k(V^*)\to \Lambda^{k-l}( V^* )$ for any $l$, such that $\iota_{\xi\wedge\zeta}\alpha=\iota_\zeta(\iota_\xi\alpha)$ holds for any $\xi\in\Lambda^l(V)$ and $\zeta\in\Lambda^{l'}V$.
\item An element $\alpha\in \Lambda^k(V^*)$ is called \emph{non-degenerate} if the map $V\to \Lambda^{k-1}(V^*),$ $v\mapsto \iota_v\alpha$ is injective and \emph{degenerate} otherwise. 
\item A multivector $\xi\in\Lambda^k(V)$ resp. a form $\alpha\in \Lambda^k(V^*)$ is called \emph{stable} if the orbit $GL(V)\cdot \xi\subset\Lambda^k(V)$ resp. $GL(V)\cdot \alpha\subset \Lambda^k(V^*)$ is open and \emph{unstable} otherwise.
\end{enumerate}
\end{Definition}

\begin{Remark}
Whenever at least one non-degenerate element exists in $\Lambda^k(V^*)$, all degenerate elements are automatically unstable.
\end{Remark}

As stability and non-degeneracy are preserved by the $GL(V)$-action, we will call an orbit $GL(V)\cdot \alpha\subset \Lambda^k(V^*)$ \emph{non-degenerate} (resp. \emph{stable}) if $\alpha$ is.

\begin{Remark}\label{hodgeremark}
The choice of a linear isomorphims from $V$ to $V^*$ induces an isomorphism $\Lambda^k(V)\to \Lambda^k(V^*)$, which maps orbits of $\theta_k$ to orbits of $\theta^k$. Thus the classification of orbits of $\theta_k$ is equivalent to the classification of orbits of $\theta^k$. In terms of $\Lambda^k(V)$ degeneracy of a multivector $\xi$ is equivalent to the existence of a subspace $W\subsetneqq V$ such that $\xi\in \Lambda^k(W)\subset \Lambda^k(V)$. We choose the perspective of orbits in $\Lambda^k(V^*)$, as it has a more immediate relation to multisymplectic geometry. 
\end{Remark}

\section{Statement of the theorem}

\begin{Theorem}\label{thm} Let $V$ be a real vector space of dimension $n> 1$ and $k\in \mathbb N$. The natural action of $GL(V)$ on $\Lambda^k(V^*)$ has the following number of orbits (resp. non-degenerate orbits and stable orbits) depending on $n$ and $k$.
\begin{table}[H]
\centering
\begin{tabular}{|l|l|l|l|}
\hline
                             & orbits      & non-deg. orbits& stable orbits \\ \hline
$k=1$                        & 2                           & 0                     &   1             \\ \hline
$k=2$, $n$ even                        & $\frac{n}{2}+1$ & 1  &   1  \\ \hline
$k=2$, $n$ odd                        & $\frac{n+1}{2}$ & 0 &   1  \\ \hline
$k=3$, $n=6$                   & 6                           & 3        &  2                           \\ \hline
$k=3$, $n=7$                   & 14                          & 8         &    2                        \\ \hline
$k=3$, $n=8$                   & 35                          & 21         &     3                      \\ \hline
$k=4$, $n=7$                   &   {20}      &{15}       & 4            \\ \hline
$k=4, n=8$                    & $\infty$                    & $\infty$                    &  0        \\ \hline
$k=5$, $n=8$                  &   {35}      &{31}            &    3    \\ \hline
$3\leq k\leq n-3$, $n\geq 9$ & $\infty$                    & $\infty$                       &     0  \\ \hline
$k=n-2$, $n \geq 6$, $n=2 \mod 4$   &   $\frac{n}{2}+2$             &      $\frac{n}{2}$    & 2                       \\\hline
$k=n-2$, $n \geq 5$, $n\neq 2 \mod 4$   &   $\lfloor\frac{n}{2}\rfloor+1$             &       $\lfloor\frac{n}{2}\rfloor - 1$      &      1                \\ \hline
$k=n-1$                      & 2                           & 0                      &  1             \\ \hline
$k=n$                        & 2                           & 1                    &        1         \\ \hline
\end{tabular}
\end{table}
\end{Theorem}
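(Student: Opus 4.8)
\emph{The plan is to organize the whole table around two $GL(V)$-equivariances together with one dimension count, reducing the thirteen rows to a short list of genuinely different computations.} The first reduction is the metric duality of Remark \ref{hodgeremark}, which identifies the orbit spaces of $\Lambda^k(V)$ and $\Lambda^k(V^*)$; I use it freely to pass between multivectors and forms. The second is the volume-form (Hodge-type) isomorphism $\Phi\colon \Lambda^{n-k}(V)\xrightarrow{\ \sim\ }\Lambda^k(V^*)$, $\xi\mapsto \iota_\xi\,\mathrm{vol}$, which is $GL(V)$-equivariant up to a power of $\det$. Over $\mathbb{C}$ this would transport the \emph{total} orbit count verbatim between complementary degrees $k\leftrightarrow n-k$; over $\mathbb{R}$ it does so only up to a correction coming from the sign of the determinant, discussed below. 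I want to stress at the outset that $\Phi$ does \emph{not} preserve non-degeneracy, since the defining map $v\mapsto\iota_v\alpha$ lives in a different target degree on each side, whereas orbit dimension, hence stability (openness), is preserved. Thus from the dualities I may import only the total counts, and I must recompute the non-degenerate column in every degree separately.

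Next I dispatch the elementary and the infinite rows. For $k=1$ the orbits are $\{0\}$ and its open complement; as $n>1$ the contraction lands in the $1$-dimensional space $\Lambda^0(V^*)$ and cannot be injective, so there is no non-degenerate orbit and one stable orbit. For $k=n-1$ every nonzero form is decomposable and $GL(V)$-equivalent to $e^1\wedge\cdots\wedge e^{n-1}$, which is degenerate (the vector dual to the missing coordinate is contracted to zero); again $2$ orbits, none non-degenerate, the open one stable. For $k=n$ a nonzero top form is non-degenerate and all are equivalent, giving $2$ orbits, one non-degenerate and stable. For $k=2$ I invoke the classical normal form by rank: orbits are indexed by the rank $2r\le n$, non-degeneracy means full rank (possible iff $n$ is even), and the maximal-rank orbit is the unique open one. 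For the rows $3\le k\le n-3$ with $n\ge 9$ and for $(n,k)=(8,4)$ I use the dimension count $\binom{n}{k}>n^2=\dim GL(V)$: every orbit then has dimension $<\dim\Lambda^k(V^*)$, and a finite union of locally closed sets of smaller dimension cannot fill the ambient space, whence there are infinitely many orbits, no open orbit, zero stable orbits, and infinitely many non-degenerate ones.

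There remain the finite sporadic degrees. For $k=3$ and $n\in\{6,7,8\}$ I read the totals $6,14,35$ off the classical real-trivector classifications (Schouten and Gurevich over $\mathbb{C}$, refined over $\mathbb{R}$ by Djokovi\'c and by Cohen--Helminck); from the explicit normal forms I then mark the non-degenerate orbits as exactly those not supported on a proper subspace (Remark \ref{hodgeremark}) and the stable orbits as the top-dimensional (open) ones. For $k=n-2$ I transport the total count from the rank classification of $k=2$ through $\Phi$, but here the real correction is active: the stabilizer of the top form lies in $SL(V)$, so the sign of $\det$ is a genuine invariant that a scalar cannot always absorb, and tracking it shows that precisely one extra (non-degenerate, open) orbit appears when $n\equiv 2\pmod 4$. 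I then recompute non-degeneracy directly in degree $n-2$, where the condition that $v\mapsto\iota_v\alpha\in\Lambda^{n-3}(V^*)$ be injective is met by many orbits and is unrelated to the single symplectic orbit on the $k=2$ side.

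Finally, the two new cases are the content of Propositions \ref{74} and \ref{85}, which I would establish by the same real-descent strategy. For $(8,5)$ I transport the total $35$ from the classical $(8,3)$ classification via $\Phi$; since $n=8$ is even the orientation invariant turns out to be absorbable, so no orbit splits and the total stays $35$, after which I recompute $31$ non-degenerate and $3$ stable orbits on normal forms. For $(7,4)$ the dimension $n=7$ is odd, $\Phi$ is only $GL^+(V)$-equivariant, and the determinant sign is \emph{not} absorbable, so six of the fourteen degree-$3$ orbits split into two, yielding $20$ total; again I recompute the $15$ non-degenerate and $4$ stable orbits directly. \emph{The main obstacle is exactly this real descent:} deciding, orbit by orbit, whether the $\det<0$ component of $GL(V)$ acts transitively on a given oriented orbit---i.e.\ whether it splits over $\mathbb{R}$---and then, independently of that bookkeeping, testing injectivity of $v\mapsto\iota_v\alpha$ and openness for each resulting real normal form.
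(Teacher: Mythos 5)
Your plan is correct and, on the two genuinely new cases, coincides with the paper's: both rest on the volume-contraction duality $\xi\mapsto\iota_\xi\Omega$ of Lemma \ref{dualityLemma} and on deciding, orbit by orbit, whether the sign of the determinant can be absorbed --- equivalently whether the stabilizer meets the negative-determinant component of $GL(V)$ --- which is exactly how Propositions \ref{74} and \ref{85} produce $14+6=20$ and $35$. The differences are elsewhere. First, you argue the elementary rows ($k\in\{1,2,n-1,n\}$) and the infinite rows (via $\binom{n}{k}>n^2=\dim GL(V)$) directly, where the paper simply cites \cite{MR0286119}; your dimension count is sound and makes the theorem more self-contained. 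Second, and more substantively, you propose to recompute the non-degenerate column by testing injectivity of $v\mapsto\iota_v\alpha$ on each normal form, whereas the paper never touches normal forms of $4$- or $5$-forms: by Lemma \ref{degenerateLemma} the degenerate orbits in dimension $n$ biject with \emph{all} orbits in dimension $n-1$, so for $(8,5)$ one gets $35-4=31$ and for $(7,4)$ one gets $20-5=15$ from rows already present in the table. Your route works in principle but requires explicit normal forms in degrees where none are written down; the paper's reduction is the device worth adopting. You are right, and it is worth stressing, that $c$ does not preserve non-degeneracy, which is why some separate argument for that column is unavoidable.

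Two smaller points. The assertion that exactly six of the fourteen trivector orbits split for $(7,4)$ is the entire content of the hard step, and your proposal states it rather than derives it; the paper settles it by exhibiting a negative-determinant reflection stabilizing each of the $6$ degenerate trivectors, and by reading off from the stabilizer tables of \cite{MR1982435} that exactly $2$ of the $8$ non-degenerate stabilizers contain negative-determinant elements (the two stable ones do not, whence the $4$ stable orbits). Also, sign-absorbability is governed by the parity of the degrees $k$ and $n-k$, via $(\lambda\,\mathrm{id}_V)\cdot\alpha=\lambda^{n-k}\alpha$, not by the parity of $n$ as such; for source degree $3$ in dimensions $7$ and $8$ the two formulations happen to agree, but the criterion should be phrased in terms of the form degree.
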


\begin{proof}
The cases $k\in \{0,1,2,n{-}2,n{-}1,n\}$ and the cases with infinite number of orbits have been solved in \cite{MR0286119}. The remaining cases for $k=3$ have been completed in \cite{MR691457} together with the stability considerations in \cite{MR2433628}. We settle the remaining cases $(n,k)=(7,4)$ in Proposition \ref{74} and $(n,k)=(8,5)$ in Proposition \ref{85}.
\end{proof}

\begin{Remark}
If two $k$-forms on $V$ lie in the same $GL(V)$-orbit, one says that they are ``equivalent'' or they have the same ``linear type''.
\end{Remark}

\section{Preparations}
 
In the sequel we will use the following technique which was used in \cite{MR0286119} to identify the orbits of $(n{-}2)$-forms in an $n$-dimensional vector space.
\begin{Lemma}\label{dualityLemma}
Let $\Omega\in \Lambda^n(V^*)$ be a volume form and $k\in\{0,...,n\}$. Then the linear isomorphism $c:\Lambda^k(V)\to \Lambda^{n-k}(V^*),~ \xi\mapsto\iota_\xi\Omega$ has the following properties:
\begin{enumerate}[(1)]
\item The isomorphism $c$ preserves stability.
\item For all $g\in GL(V)$ we have $g\cdot c(\xi)=det (g)c(g^{-1}\xi)$.
\item If $\xi$ lies in the orbit of $\zeta$, then $c(\xi)$ lies in the orbit of $c(\zeta)$ or in the orbit of $-c(\zeta)$. When $n{-}k$ is odd those two orbits coincide.
\item If $c(\xi)$ and $c(\zeta)$ lie in the same orbit, then $\xi$ lies in the orbit of $\zeta$ or in the orbit of $-\zeta$. When $k$ is odd those two orbits coincide.
\end{enumerate}
\end{Lemma}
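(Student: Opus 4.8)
The plan is to establish (2) first by a direct computation and then deduce the remaining three parts from it. For (2) it suffices to treat a decomposable $\xi = v_1\wedge\dots\wedge v_k$ and extend by linearity of $c$. Unwinding the definitions, $(g\cdot c(\xi))(w_1,\dots,w_{n-k}) = \Omega(v_1,\dots,v_k,gw_1,\dots,gw_{n-k})$, and the transformation law $\Omega(gu_1,\dots,gu_n)=\det(g)\,\Omega(u_1,\dots,u_n)$ for the volume form (applied after substituting $u_i=g^{-1}v_i$ for $i\le k$ and $u_{k+j}=w_j$) rewrites this as $\det(g)\,\Omega(g^{-1}v_1,\dots,g^{-1}v_k,w_1,\dots,w_{n-k}) = \det(g)\,(c(g^{-1}\xi))(w_1,\dots,w_{n-k})$. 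This is exactly the claimed identity $g\cdot c(\xi)=\det(g)\,c(g^{-1}\xi)$.

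For (1) I would differentiate (2). Writing $g=\exp(tX)$ with $X\in\mathfrak{gl}(V)$ and differentiating at $t=0$ yields the infinitesimal relation $X\cdot c(\xi) = \operatorname{tr}(X)\,c(\xi) - c(X\cdot\xi)$, where $X\cdot(-)$ denotes the induced action of $\mathfrak{gl}(V)$ on the respective exterior power. Since an orbit is open precisely when it has full dimension, that is, when the infinitesimal action map is surjective, the point is to compare the images of $X\mapsto X\cdot\xi$ and $X\mapsto X\cdot c(\xi)$. Using $\operatorname{id}\cdot\xi=k\,\xi$ and $\operatorname{id}\cdot c(\xi)=(n-k)\,c(\xi)$ one checks, for $0<k<n$, that the image of the second map equals $c$ applied to the image of the first; as $c$ is a linear isomorphism the two images have equal dimension, so one orbit is open if and only if the other is. The boundary cases $k\in\{0,n\}$ are trivial and can be checked by hand.

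Parts (3) and (4) I would obtain from (2) together with the scaling behaviour of the actions. If $\xi=g\zeta$, then (2) gives $c(\xi)=\det(g)\,\theta^{n-k}_{g^{-1}}(c(\zeta))$, so $c(\xi)$ is a nonzero scalar multiple of an element of the orbit of $c(\zeta)$. Replacing $g^{-1}$ by $\mu g^{-1}$ multiplies $\theta^{n-k}_{g^{-1}}(c(\zeta))$ by $\mu^{n-k}$, so $c(\xi)$ lies in the orbit of $c(\zeta)$ as soon as the scalar $\det(g)$ admits an $(n-k)$-th real root, and in the orbit of $-c(\zeta)$ otherwise; this is precisely the dichotomy governed by the parity of $n-k$, and when $n-k$ is odd every real number, in particular $-1$, is an $(n-k)$-th power, so the two orbits coincide. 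Part (4) is the mirror image: from $c(\xi)=\theta^{n-k}_h(c(\zeta))$ and the injectivity of $c$ one gets $\xi=\det(h)\,h^{-1}\zeta$, and the same root-extraction argument, now with exponent $k$, yields the statement and the coincidence of orbits when $k$ is odd.

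The computations themselves are routine; the genuine subtlety, and the point I would be most careful about, is the bookkeeping of the determinant factor produced by $c$. It is exactly this scalar, and whether or not it can be absorbed into the group action by extracting a real root of the appropriate parity, that forces the appearance of the auxiliary orbit of $-c(\zeta)$ (resp.\ $-\zeta$) and pins down precisely when it can be dispensed with.
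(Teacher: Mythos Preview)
Your argument is correct, and for parts (2)--(4) it coincides with the paper's: the paper also derives (2) from the transformation law $g\cdot\Omega=\det(g)\,\Omega$, and for (3) and (4) it absorbs the scalar $\det(g)$ into the group action by acting with $|\det(g)|^{-1/(n-k)}\cdot\mathrm{id}_V$ (your $\mu$), leaving only the sign to account for.

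The one genuine difference is in (1). The paper disposes of it in a single sentence, invoking only that a linear isomorphism between finite-dimensional spaces is a homeomorphism; implicitly this relies on the orbit correspondence supplied by (2)--(4), so that $c$ carries an orbit to an orbit or to a union of two orbits and openness transfers. Your route---differentiating (2) to obtain $X\cdot c(\xi)=\operatorname{tr}(X)\,c(\xi)-c(X\cdot\xi)$ and then showing that the image of $X\mapsto X\cdot c(\xi)$ equals $c$ applied to the image of $X\mapsto X\cdot\xi$---is more computational but also more self-contained: it establishes equality of orbit dimensions directly, without forward reference to (3) and (4). One small caution: your remark that the boundary cases $k\in\{0,n\}$ ``can be checked by hand'' is not quite right, since for $k=0$ the orbit of $1\in\Lambda^0 V$ is a single point while the orbit of $c(1)=\Omega$ is all of $\Lambda^n(V^*)\setminus\{0\}$; so (1) as literally stated fails at the endpoints. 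This is harmless for the applications, and the paper's own one-line proof glosses over the same issue.
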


\begin{proof}$ $
\begin{enumerate}[(1)]
\item This follows directly from the continuity of linear maps between finite-dimensional vector spaces.
\item This follows from the formula $g\cdot (\iota_\xi\Omega)=\iota_{(g^{-1}\cdot \xi)}g\Omega$ and the fact that $g\cdot \Omega=\theta^n_g(\Omega)=det(g)\Omega$ by the definition of the determinant.
\item If $\xi=g\cdot \zeta$, then $c(\zeta)=\frac{1}{det(g)}g\cdot c(\xi)$, so $c(\xi)$ lies in the orbit of $det(g) c(\zeta)$. By applying the endomorphism $\sqrt[n-k]{|det(g^{-1})|}\cdot id_V$ we see that $c(\xi)$ lies in the orbit of $\text{sign}(det(g))c(\zeta)$. The second part of the statement follows from the identity $(\lambda \cdot \text{id}_V)\cdot \alpha=\lambda^{n-k}\alpha$ for $\alpha\in \Lambda^{n-k}(V^*)$ and $\lambda\in \mathbb{R}^*$.
\item The proof of this part is analogous to that of part (3).
\end{enumerate}
\end{proof}

The following lemma will help us deduce the number of non-degenerate orbits, once we know the total number of orbits.
\begin{Lemma}\label{degenerateLemma}
The degenerate orbits of $k$-forms on a $n$-dimensional vector space are in one-to-one correspondance to all orbits of $k$-forms on a $n{-}1$-dimensional vector space.
\end{Lemma}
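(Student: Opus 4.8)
The plan is to work with multivectors rather than forms, invoking Remark \ref{hodgeremark}, since there degeneracy of $\xi\in\Lambda^k(V)$ has the transparent characterization that $\xi$ is degenerate iff $\xi\in\Lambda^k(U)$ for some proper subspace $U\subsetneq V$. Fix once and for all a hyperplane $W\subset V$ with $\dim W=n-1$. Then $\Lambda^k(W)\subset\Lambda^k(V)$, and every element of $\Lambda^k(W)$ is automatically degenerate. I would construct the desired bijection as the map
\[
\Phi:\{GL(W)\text{-orbits in }\Lambda^k(W)\}\longrightarrow\{\text{degenerate }GL(V)\text{-orbits in }\Lambda^k(V)\}
\]
sending the $GL(W)$-orbit of $\xi\in\Lambda^k(W)$ to the $GL(V)$-orbit of the same $\xi$, and then verify that $\Phi$ is well-defined, surjective, and injective.

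For well-definedness, observe that any $h\in GL(W)$ extends to some $\tilde h\in GL(V)$ (for instance by the identity on a chosen complement of $W$), and $\tilde h\cdot\xi=h\cdot\xi$ whenever $\xi\in\Lambda^k(W)$; hence $GL(W)$-equivalent elements of $\Lambda^k(W)$ are $GL(V)$-equivalent, and the resulting $GL(V)$-orbit is degenerate because it meets $\Lambda^k(W)$ with $W$ proper. Surjectivity is the observation that every degenerate orbit has a representative in $\Lambda^k(W)$: if $\xi\in\Lambda^k(U)$ with $\dim U\le n-1=\dim W$, choose $g\in GL(V)$ carrying $U$ into $W$ (extend an injection $U\hookrightarrow W$ to an automorphism of $V$), so that $g\cdot\xi\in\Lambda^k(W)$ lies in the same orbit.

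The heart of the argument, and the step I expect to cost the most, is injectivity: I must show that if $\xi,\eta\in\Lambda^k(W)$ satisfy $\eta=g\cdot\xi$ for some $g\in GL(V)$, then they already lie in the same $GL(W)$-orbit. Here I would use the minimal support $U_\xi$, the smallest subspace with $\xi\in\Lambda^k(U_\xi)$, which is well-defined because $\Lambda^k(U_1)\cap\Lambda^k(U_2)=\Lambda^k(U_1\cap U_2)$, and which transforms naturally, giving $U_\eta=g(U_\xi)$. Thus $g$ restricts to an isomorphism $g|_{U_\xi}:U_\xi\to U_\eta$ between two subspaces of $W$ of equal dimension, which I would extend to an automorphism $h\in GL(W)$ by choosing complements $W=U_\xi\oplus C_\xi=U_\eta\oplus C_\eta$ together with any isomorphism $C_\xi\to C_\eta$. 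Since $h$ agrees with $g$ on $U_\xi$ and $\xi\in\Lambda^k(U_\xi)$, this yields $h\cdot\xi=g\cdot\xi=\eta$, so that $\xi$ and $\eta$ lie in the same $GL(W)$-orbit. The two delicate points to nail down are the naturality of the minimal support under $GL(V)$ and the extendability of a partial isomorphism between subspaces of $W$ to a full automorphism of $W$; both are elementary, but should be stated with care as they carry the whole argument.
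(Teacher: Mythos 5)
Your proof is correct, and it takes a recognizably different route from the paper's. The paper stays on the side of forms: it extends a form on $W$ by zero along a chosen vector $a$, and for injectivity it uses the annihilator subspace $\mathrm{ann}(\phi(\alpha))=\{v: \iota_v\phi(\alpha)=0\}$, first correcting the given $g\in GL(V)$ by a stabilizer element $h$ so that $hg$ fixes $a$, and then projecting $hg$ to an automorphism of $W$. You instead dualize to multivectors via Remark \ref{hodgeremark} and run the injectivity step through the minimal support $U_\xi$, which is precisely the dual invariant of the annihilator ($\mathrm{ann}(\alpha)$ is the annihilator in $V$ of the support of $\alpha$ in $V^*$); the naturality $U_{g\cdot\xi}=g(U_\xi)$ plays the same role as the paper's observation that $a$ and $ga$ both lie in $\mathrm{ann}(\phi(\alpha))$. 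What your version buys is a cleaner endgame: rather than manufacturing a stabilizer element and composing with $\mathrm{pr}_W$, you simply extend the partial isomorphism $g|_{U_\xi}:U_\xi\to U_\eta$ to an automorphism of $W$, and the identity $h\cdot\xi=g\cdot\xi$ is immediate because $\xi\in\Lambda^k(U_\xi)$. The price is the one nontrivial input you correctly flag, namely $\Lambda^k(U_1)\cap\Lambda^k(U_2)=\Lambda^k(U_1\cap U_2)$, which guarantees that the minimal support exists; this is standard (take a basis of $U_1\cap U_2$, extend it separately to bases of $U_1$ and of $U_2$, complete to a basis of $V$, and compare coefficients in the induced basis of $\Lambda^k(V)$), but it should be written out, as should the fact that the isomorphism of Remark \ref{hodgeremark} matches degenerate orbits of forms with degenerate orbits of multivectors. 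With those two points made explicit, your argument is complete.
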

\begin{proof}
Let $W$ be a $(n{-}1)$-dimensional subspace of $V$ and $a\in V\backslash W$. Then $V=W\oplus \mathbb R\cdot a$. Let $\text{pr}_W:V\to W$ be the projection. We show that the $GL(W)$-orbits of $\Lambda^k(W^*)$ are in one-to-one correspondence to $GL(V)$-orbits of $\Lambda^k_{deg}(V^*)$, the degenerate $k$-forms on $V$, via the map 
\[\bar \phi:\frac{\Lambda^k(W^*)}{GL(W)}\to \frac{\Lambda^k_{deg}(V^*)}{GL(V)}, ~ [\alpha]=GL(W)\cdot \alpha\mapsto GL(V)\cdot \phi(\alpha)=[\phi(\alpha)],\]
 induced by 
\[\phi:\Lambda^k(W^*)\to \Lambda^k(W^*)\oplus(\Lambda^{k-1}(W^*)\otimes \mathbb( R\cdot a)^*)\cong\Lambda^k(V^*),~~\alpha\mapsto (\alpha \oplus 0).\] 

{\bf Surjectivity of $\bar\phi$.} Assume $\alpha\in \Lambda^k(V^*)$ is degenerate, i.e. there exists a $v$ in $V\backslash \{0\}$ such that $\iota_v\alpha=0$. Choosing a $g\in GL(V)$ such that $g^{-1}v=a$, we get $\iota_a(g\cdot \alpha)=0$, i.e. $(g\cdot \alpha)|_W\in \Lambda^k(W^*)$ satisfies $\phi((g\cdot \alpha)|_W)=g\cdot \alpha$. Consequently $\bar\phi([(g\cdot \alpha)|_W])=[\alpha]$ holds and the map $\bar \phi$ is surjective.\\

{\bf Injectivity of $\bar\phi$.} Let $\alpha,\beta\in \Lambda^k(W^*)$ and $\phi(\alpha)$ be equivalent to $\phi(\beta)$, i.e. there exists a map $g\in GL(V)$ such that $g\cdot \phi(\alpha)=\phi(\beta)$. Especially $ga$ and $a$ are both elements of the vector subspace $\text{ann}(\phi(\alpha)):=\{v\in V| \iota_v\phi(\alpha)=0\}$. We pick an element $h\in GL(V)$ such that $h(ga)=a$ and $h\cdot \phi(\alpha) =\phi(\alpha)$.
Then we have $(hg)\cdot\phi(\alpha)=g\cdot(h\cdot\phi(\alpha))=g\cdot \phi(\alpha)=\phi(\beta)$, where $hg(a)=a$. Consequently $\text{pr}_W\circ (hg)|_W$ yields a well-defined automorphism of $W$ satisfying $(\text{pr}_W\circ (hg)|_W)\cdot \alpha=\beta$. Hence, $[\alpha]=[\beta]$ and $\bar\phi$ is injective.

\end{proof}

\section{Proofs of the remaining cases}
\begin{Proposition}\label{85}Let $V$ be 8-dimensional. The natural action of $GL(V)$ on $\Lambda^5(V^*)$ has 35 orbits, 31 of those orbits are non-degenerate and three are stable.
\end{Proposition}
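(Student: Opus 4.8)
The plan is to obtain all three counts by transporting the already-known classification of $3$-vectors (equivalently $3$-forms) on $\mathbb R^8$ to $5$-forms via the duality map, and to treat non-degeneracy by a separate dimension-reduction argument, since that duality map does not respect non-degeneracy.

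First I would apply the isomorphism $c:\Lambda^3(V)\to\Lambda^5(V^*)$ of Lemma \ref{dualityLemma}, i.e. the instance $k=3$, $n-k=5$ of that lemma. Both $k=3$ and $n-k=5$ are odd, so by part (3) the map $c$ descends to a well-defined map on $GL(V)$-orbits with no sign ambiguity, and by part (4) this induced map is injective; as $c$ is bijective on the underlying spaces it is also surjective on orbits. Hence $c$ yields a bijection between $GL(V)$-orbits on $\Lambda^3(V)$ and those on $\Lambda^5(V^*)$. Combining this with Remark \ref{hodgeremark}, which identifies orbits on $\Lambda^3(V)$ with orbits on $\Lambda^3(V^*)$, and with the known count of $35$ orbits for $3$-forms on $\mathbb R^8$ (the case $(n,k)=(8,3)$), I obtain $35$ orbits on $\Lambda^5(V^*)$. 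Since $c$ preserves stability by part (1), the three stable orbits of the $3$-form case correspond to exactly three stable $5$-form orbits.

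For the non-degenerate count I would argue by complement. By Lemma \ref{degenerateLemma} the degenerate orbits of $5$-forms on $\mathbb R^8$ are in bijection with all orbits of $5$-forms on $\mathbb R^7$. For $n=7$ one has $k=5=n-2$ and $7\equiv 3\pmod 4$, so the $(n{-}2)$-form entry of the table gives $\lfloor 7/2\rfloor+1=4$ orbits. Thus there are exactly $4$ degenerate orbits, leaving $35-4=31$ non-degenerate orbits, as claimed.

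The main obstacle is precisely this non-degenerate count: it cannot be read off from $c$, because $c$ does not in general send non-degenerate forms to non-degenerate forms — only the total number of orbits and the notion of stability transport cleanly across the duality. This is what forces the separate reduction to dimension $7$ via Lemma \ref{degenerateLemma}, and it is essential that the resulting lower-dimensional problem is the already solved $(n{-}2)$-case. The remaining care is bookkeeping: verifying the parity conditions ($k$ and $n-k$ both odd) that render the orbit bijection sign-free, and selecting the correct row ($7\not\equiv 2\pmod 4$) of the $(n{-}2)$-formula.
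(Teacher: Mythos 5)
Your proposal is correct and follows essentially the same route as the paper: the duality map $c$ of Lemma \ref{dualityLemma} (with $k=3$ and $n-k=5$ both odd) gives a stability-preserving bijection with the $35$ orbits of the known case $(n,k)=(8,3)$, and the non-degenerate count $35-4=31$ comes from Lemma \ref{degenerateLemma} together with the $4$ orbits in the case $(n,k)=(7,5)$. Your extra care in reading off the $4$ from the $(n{-}2)$-row with $7\not\equiv 2 \pmod 4$ is just a more explicit version of the paper's citation of that case.
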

\begin{proof}
We fix a volume form $\Omega$ on $V$. In the case at hand, $(n,k)=(8,3)$, both $k$ and $n{-}k$ are odd. Parts (3) and (4) of Lemma \ref{dualityLemma} imply that the map $c$ induces a bijection between the orbits of $\Lambda^5(V^*)$ and the orbits of $\Lambda^3(V)$. This bijection preserves stable orbits. Thus the total number of orbits and the number of stable orbits correspond to the numbers known from the case $(n,k)=(8,3)$. To get the number of non-degenerate orbits one observes that in the case $(n,k)=(7,5)$ there are 4 orbits and applies Lemma \ref{degenerateLemma}.
\end{proof}

\begin{Proposition}\label{74}Let $V$ be 7-dimensional. The natural action of $GL(V)$ on $\Lambda^4(V^*)$ has 20 orbits, 15 of those orbits are non-degenerate and 4 are stable.
\end{Proposition}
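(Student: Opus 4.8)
The plan is to reduce the count to the already-settled case $(n,k)=(7,3)$ through the duality map $c$ of Lemma \ref{dualityLemma}, and to read off the degenerate orbits from a lower-dimensional case via Lemma \ref{degenerateLemma}. By Remark \ref{hodgeremark} it suffices to count $GL(V)$-orbits in $\Lambda^4(V)$. Fix a volume form $\Omega$ and consider $c:\Lambda^4(V)\to\Lambda^3(V^*)$. Here $n-k=3$ is odd, so by part (3) of Lemma \ref{dualityLemma} the orbits of $c(\xi)$ and $-c(\xi)$ coincide, and $c$ descends to a well-defined surjection $\bar c$ from the orbit set of $\Lambda^4(V)$ onto that of $\Lambda^3(V^*)$. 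Since $k=4$ is even, part (4) only gives $\xi\sim\zeta$ or $\xi\sim-\zeta$ whenever $c(\xi)\sim c(\zeta)$, so the fiber of $\bar c$ over $[c(\xi)]$ is exactly $\{[\xi],[-\xi]\}$, of cardinality one or two. Writing $s$ for the number of orbits $[\eta]\in\Lambda^3(V^*)/GL(V)$ with a two-element fiber, and using that $\Lambda^3(V^*)$ has $14$ orbits, the total number of orbits of $\Lambda^4(V)$ is $14+s$. Thus the whole problem is to show $s=6$.

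To compute $s$ I would determine, for each of the $14$ orbit types of $\Lambda^3(\mathbb R^7)$, whether the corresponding $4$-vector $\xi=c^{-1}(\eta)$ satisfies $\xi\sim-\xi$ or not; the fiber splits precisely when $\xi\not\sim-\xi$. Starting from explicit normal forms for the real trivector orbits in dimension $7$ underlying the $(n,k)=(7,3)$ classification, I would transport them through $c^{-1}$ and, for each representative, either exhibit a $g\in GL(V)$ with $g\cdot\xi=-\xi$ (so the orbit does not split) or produce a relative $GL(V)$-invariant that distinguishes $\xi$ from $-\xi$ (so it does). Note that scaling alone never helps, since $(\lambda\,\mathrm{id})\cdot\xi=\lambda^4\xi$ is always a positive multiple of $\xi$; the sign must be realized by a genuinely different automorphism, which is exactly why the answer is sensitive to the detailed structure of each orbit. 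I expect this case-by-case analysis, which should produce precisely six splitting types, to be the main obstacle.

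The number of non-degenerate orbits then follows cleanly, and here one must use Lemma \ref{degenerateLemma} rather than $c$, since $c$ does not respect non-degeneracy (the two contraction conditions live in different degrees). By Lemma \ref{degenerateLemma} the degenerate orbits of $\Lambda^4(\mathbb R^7)$ are in bijection with all orbits of $\Lambda^4(\mathbb R^6)$, and the latter falls under the case $k=n-2$, $n=6\equiv 2\bmod 4$ of Theorem \ref{thm}, giving $\tfrac{6}{2}+2=5$ orbits. Hence $\Lambda^4(\mathbb R^7)$ has $5$ degenerate orbits and $20-5=15$ non-degenerate ones.

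Finally, for stability I would use that $c$ preserves stability (part (1) of Lemma \ref{dualityLemma}), so $\bar c$ maps the stable orbits of $\Lambda^4(V)$ onto the stable orbits of $\Lambda^3(V^*)$, of which there are $2$, with the fibers still of the form $\{[\xi],[-\xi]\}$. It therefore remains to check that both stable trivector orbits split, i.e. that for the associated $4$-vector $\xi$—whose stabilizer is a $14$-dimensional real form of $G_2$, since $\dim GL(\mathbb R^7)-\dim\Lambda^4(\mathbb R^7)=49-35=14$—one has $\xi\not\sim-\xi$. Granting this, which I would verify using the $G_2$-structure or a suitable sign invariant, the two stable trivector orbits yield $2\cdot 2=4$ stable orbits in $\Lambda^4(\mathbb R^7)$, completing the count.
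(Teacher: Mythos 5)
Your framework coincides with the paper's: reduce to the known classification of trivectors in dimension $7$ via the duality map $c$, observe that each of the $14$ trivector orbits contributes one or two orbits of $4$-forms, and count degenerate orbits via Lemma \ref{degenerateLemma}. The parts you carry out are correct --- $\bar c$ is well defined and surjective because $n-k=3$ is odd, its fibers are $\{[\xi],[-\xi]\}$ because $k=4$ is even, and the non-degenerate count $20-5=15$ via $\Lambda^4(\mathbb R^6)$ is exactly right. But the heart of the proposition is the claim you defer: that precisely $s=6$ of the $14$ fibers have two elements, and that the two fibers over the stable orbits are among them. You offer only ``I expect this case-by-case analysis \dots to produce precisely six splitting types'' and ``granting this, which I would verify.'' Since $20=14+6$ and $4=2\cdot 2$ are exactly the numbers to be established, this is a plan rather than a proof.

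The paper closes this gap with a concrete criterion plus a citation. By Lemma \ref{dualityLemma}(2) one has $g\cdot c(\xi)=\det(g)\,c(g^{-1}\xi)$, and after absorbing $|\det(g)|$ by a scaling (which acts by $\lambda^{n-k}$) the orbit of $c(\xi)$ contains $-c(\xi)$ if and only if $\mathrm{Stab}(\xi)\subset GL(V)$ contains an element of negative determinant. For the six degenerate trivector orbits this is automatic: writing $\xi\in\Lambda^3(W)$ for a hyperplane $W$ and taking $g$ to be the identity on $W$ and $-1$ on a complementary line gives a stabilizing element of determinant $-1$, so none of these fibers split. For the eight non-degenerate orbits the paper invokes the explicit list of stabilizers in \cite{MR1982435}: exactly two of the six non-stable ones contain a negative-determinant element, while the two $G_2$-type stabilizers of the stable orbits contain none. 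Hence $s=4+2=6$ and both stable orbits split, giving $20$ orbits and $4$ stable ones. To make your argument complete you would need to reproduce or cite this stabilizer analysis; the reflection trick for the degenerate orbits and the absence of negative-determinant elements in the $G_2$-stabilizers can be done by hand, but the six non-degenerate non-stable orbits genuinely require the explicit normal forms and their stabilizers.
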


\begin{proof}
We fix a volume form $\Omega$ on $V$. Let $\xi_1,...,\xi_{14}$ be representatives of the orbits in $\Lambda^3(V)$, such that $\xi_1,...,\xi_6$ are degenerate and $\xi_{13}$ and $\xi_{14}$ are stable. The degree $k=3$ of the multivectors $\xi$ is odd, so by Lemma \ref{dualityLemma} part (4), we know that the orbits of $c(\xi_i)$ and $c(\xi_j)$ are different for $i\neq j$. Identifying the orbits in $\Lambda^4(V^*)$ thus amounts to finding out for which $i$ the orbits of $c(\xi_i)$ and $-c(\xi_i)$ coincide. Lemma \ref{dualityLemma} part (2) implies that the orbits coincide if and and only if the stabilizer $Stab(\xi_i)\subset GL(V)$ contains elements of negative determinant.\\

Fix $i\in \{1,...,6\}$, i.e. $\xi_i$ is degenerate. Then there exists an $a\in V\backslash\{0\}$ such that $\iota_a\xi_i=0$ and $W\subset V$, a complement of $\mathbb R\cdot a$. The element $g\in GL(V)$ with $g(a)=-a$ and $g|_W=id_W$ has negative determinant and stabilizes $\xi_i$. In the remaining cases $i\in\{7,...,14\}$ the explicit list of stabilizers given in \cite{MR1982435} yields the following:
\begin{itemize}
\item $i\in\{7,...,12\}$: Two of the stabilizers $Stab(\xi_i)$ of non-degenerate non-stable orbits contain elements of negative determinant
\item $i\in\{13,14\}$: The stabilizers $Stab(\xi_i)$ of stable orbits do not contain elements of negative determinant.
\end{itemize}
Consequently there are $8+2\cdot 6=20$ types of orbits, four of which are stable. We get $15$ as the number of non-degenerate orbits by Lemma \ref{degenerateLemma}.
\end{proof}

\bibliographystyle{habbrv}
\bibliography{document}

\end{document}